\newtheorem{theorem}{Theorem}
\theoremstyle{plain}
\newtheorem{example}{Example}
\newtheorem{lemma}{Lemma}
\newtheorem{problem}{Problem}
\newtheorem{proposition}{Proposition}
\numberwithin{equation}{section}
\begin{document}
\title[Analytical solution of the weighted Fermat-Torricelli problem]{Analytical solution of the weighted Fermat-Torricelli problem for convex quadrilaterals in the Euclidean plane: The case of two pairs of equal weights}
\author{Anastasios N. Zachos}
\address{University of Patras, Department of Mathematics, GR-26500 Rion, Greece}
\email{azachos@gmail.com}

\keywords{weighted Fermat-Torricelli problem, weighted
Fermat-Torricelli point, convex quadrilaterals} \subjclass{51E12,
52A10, 51E10}
\begin{abstract}
The weighted Fermat-Torricelli problem for four non-collinear
points in $\mathbb{R}^{2}$ states that:

Given four non-collinear points $A_{1},$ $A_{2},$ $A_{3},$ $A_{4}$
and a positive real number (weight) $B_{i}$ which correspond to
each point $A_{i},$ for $i=1,2,3,4,$ find a fifth point such that
the sum of the weighted distances to these four points is
minimized. We present an analytical solution for the weighted
Fermat-Torricelli problem for convex quadrilaterals in
$\mathbb{R}^{2}$ for the following two cases:

(a) $B_{1}=B_{2}$ and $B_{3}=B_{4},$ for $B_{1}>B_{4}$ and (b)
$B_{1}=B_{3}$ and $B_{2}=B_{4}.$

\end{abstract}\maketitle

\section{Introduction}

The weighted Fermat-Torricelli problem for $n$ non-collinear
points in $\mathbb{R}^{2}$ refers to finding the unique point
$A_{0}\in \mathbb{R}^{2},$ minimizing the objective function:

\[f(X)=\sum_{i=1}^{n}B_{i}\|X-A_{i}\|,\] $X\in \mathbb{R}^{2}$
given four non-collinear points
$\{A_{1},A_{2},A_{3},A_{4},...,A_{n}\}$ with corresponding
positive real numbers (weights) $B_{1}, B_{2}, B_{3},
B_{4},...,B_{n}$ where $\|\cdot\|$ denotes the Euclidean distance.

The existence and uniqueness of the weighted Fermat-Torricelli
point and a complete characterization of the solution of the
weighted Fermat-Torricelli problem has been given by Y. S Kupitz
and H. Martini (see \cite{Kup/Mar:97}, theorem 1.1, reformulation
1.2 page 58, theorem 8.5 page 76, 77). A particular case of this
result for four non-collinear points in $\mathbb{R}^{2},$ is given
by the following theorem:

\begin{theorem}{\cite{BolMa/So:99},\cite{Kup/Mar:97}}\label{theor1}
Let there be given four non-collinear points
$\{A_{1},A_{2},A_{3},A_{4}\},$ $A_{1}, A_{2},
A_{3},A_{4}\in\mathbb{R}^{2}$ with corresponding positive
weights $B_{1}, B_{2}, B_{3}, B_{4}.$ \\
(a) The weighted Fermat-Torricelli point $A_{0}$ exists and is
unique. \\
(b) If for each point $A_{i}\in\{A_{1},A_{2},A_{3},A_{4}\}$

\begin{equation}\label{floatingcase}
\|{\sum_{j=1, i\ne j}^{4}B_{j}\vec u(A_i,A_j)}\|>B_i,
\end{equation}

 for $i,j=1,2,3$  holds,
 then \\
 ($b_{1}$) the weighted Fermat-Torricelli point $A_{0}$ (weighted floating equilibrium point) does not belong to $\{A_{1},A_{2},A_{3},A_{4}\}$
 and \\
 ($b_{2}$)

\begin{equation}\label{floatingequlcond}
 \sum_{i=1}^{4}B_{i}\vec u(A_0,A_i)=\vec 0,
\end{equation}
where $\vec u(A_{k} ,A_{l})$ is the unit vector from $A_{k}$ to
$A_{l},$ for $k,l\in\{0,1,2,3,4\}$
 (Weighted Floating Case).\\
 (c) If there is a point $A_{i}\in\{A_{1},A_{2},A_{3},A_{4}\}$
 satisfying
 \begin{equation}
 \|{\sum_{j=1,i\ne j}^{4}B_{j}\vec u(A_i,A_j)}\|\le B_i,
\end{equation}
then the weighted Fermat-Torricelli point $A_{0}$ (weighted
absorbed point) coincides with the point $A_{i}$ (Weighted
Absorbed Case).
\end{theorem}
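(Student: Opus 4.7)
My plan is to deduce all three parts from standard convex analysis, exactly as in the Kupitz--Martini framework. For part (a), I would observe that each summand $B_i\|X-A_i\|$ is continuous and convex, so $f$ is a continuous, convex, coercive function on $\mathbb{R}^{2}$ (it grows linearly in $\|X\|$ at infinity since every $B_i>0$). Weierstrass on a sufficiently large closed ball then yields a minimizer. For uniqueness I would argue strict convexity along every line: for a line $L\subset\mathbb{R}^{2}$, non-collinearity of the four points guarantees some $A_i\notin L$, and then $B_i\|X-A_i\|$ is strictly convex along $L$. Summing, $f$ is strictly convex along every line, so the minimizer $A_0$ is unique.

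For parts (b) and (c) I would characterize $A_0$ via the subdifferential $\partial f$. Away from the four given points $f$ is differentiable, with
\[
\nabla f(X) = -\sum_{i=1}^{4} B_i\,\vec u(X,A_i), \qquad X\notin\{A_1,\ldots,A_4\}.
\]
At a configuration point $A_i$, the non-smooth summand $B_i\|X-A_i\|$ contributes the closed disc $\{v\in\mathbb{R}^{2}:\|v\|\le B_i\}$ to $\partial f(A_i)$, while each remaining (smooth) summand contributes the single vector $-B_j\vec u(A_i,A_j)$. Hence $\vec 0\in\partial f(A_i)$ iff $\|\sum_{j\ne i}B_j\vec u(A_i,A_j)\|\le B_i$, which by convexity is equivalent to $A_i$ being the global minimizer. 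If this inequality fails at every $A_i$ (the floating hypothesis \eqref{floatingcase}), then $A_0$ lies off $\{A_1,\ldots,A_4\}$, so $f$ is differentiable there and $\nabla f(A_0)=\vec 0$ becomes \eqref{floatingequlcond}; this proves $(b_1)$ and $(b_2)$. If instead the reverse inequality holds at some $A_i$, then $A_i$ is a minimizer, and by uniqueness from (a), $A_0=A_i$, giving $(c)$.

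The main obstacle, modest as it is, will be the subdifferential calculus at the exceptional points: computing $\partial(\|\cdot\|)(\vec 0)$ as the closed Euclidean unit disc, and applying the Moreau--Rockafellar sum rule $\partial(f_1+\cdots+f_4)=\sum_i\partial f_i$, valid here because each summand is everywhere finite and continuous on $\mathbb{R}^{2}$. Once these standard convex-analytic facts are in place, the dichotomy between the floating and absorbed cases drops out of the single stationarity criterion $\vec 0\in\partial f(A_0)$.
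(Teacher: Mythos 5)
Your argument is correct and complete: coercivity plus strict convexity of $f$ along every line (guaranteed by non-collinearity) gives existence and uniqueness, and the subdifferential criterion $\vec 0\in\partial f(X)$, computed via the Moreau--Rockafellar sum rule with $\partial(B_i\|\cdot-A_i\|)(A_i)$ equal to the closed disc of radius $B_i$, cleanly yields the floating/absorbed dichotomy. Note that the paper itself offers no proof of this statement --- it is quoted as a known result from \cite{Kup/Mar:97} and \cite{BolMa/So:99} --- and your convex-analytic derivation is essentially the standard argument underlying those references, so there is nothing to fault and no divergence of method to report.
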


In 1969, E. Cockayne, Z. Melzak proved in \cite{CockayneMelzak:69}
by using Galois theory that for a specific set of five
non-collinear points the unweighted Fermat-Torricelli point
$A_{0}$ cannot be constructed by ruler and compass in a finite
number of steps (Euclidean construction).

In 1988, C. Bajaj also proved in \cite{Bajaj:87} by applying
Galois theory that for $n\ge 5$ the weighted Fermat-Torricelli
problem for $n$ non-collinear points is in general not solvable by
radicals over the field of rationals in $\mathbb{R}^{3}.$

We recall that for $n=4,$ Fagnano proved that the solution of the
unweighted  Fermat-Torricelli problem ($B_{1}=B_{2}=B_{3}=B_{4}$)
for convex quadrilaterals in $\mathbb{R}^{2}$ is the intersection
point of the two diagonals and it is well known that the solution
of the weighted Fermat-Torricelli problem for non-convex
quadrilaterals is the vertex of the non-convex angle. Extensions
of Fagnano result to some metric spaces are given by Plastria in
\cite{Plastria:06}.

In 2012,  Roussos studied the unweighted Fermat-Torricelli problem
for Euclidean triangles and Uteshev studied the corresponding
weighted Fermat-Torricelli problem and succeeded in finding an
analytic solution by using some algebraic system of equations (see
\cite{Roussos:12} and \cite{Uteshev:12}).

Thus, we consider the following open problem:

\begin{problem}
Find an analytic solution with respect to the weighted
Fermat-Torricelli problem for convex quadrilaterals in
$\mathbb{R}^{2},$ such that the corresponding weighted
Fermat-Torricelli point is not any of the given points.
\end{problem}

In this paper, we present an analytic solution for the weighted
Fermat-Torricelli problem for a given tetragon in $\mathbb{R}^{2}$
for $B_{1}>B_{4},$ $B_{1}=B_{2}$ and $B_{3}=B_{4},$ by expressing
the objective function as a function of the linear segment which
connects the intersection point of the two diagonals and the
corresponding weighted Fermat-Torricelli point (Section~2,
Theorem~\ref{theortetr}).

By expressing the angles $\angle A_{1}A_{0}A_{2}$ $\angle
A_{2}A_{0}A_{3},$ $\angle A_{3}A_{0}A_{4}$ and $\angle
A_{4}A_{0}A_{1}$ as a function of $B_{1},$ $B_{4}$ and $a$ and
taking into account the invariance property of the weighted
Fermat-Torricelli point, we obtain an analytic solution for a
convex quadrilateral having the same weights with the tetragon
(Section~3, Theorem~\ref{theorquadnn}).

Finally, we derive that the solution for the weighted
Fermat-Torricelli problem for a given convex quadrilateral in
$\mathbb{R}^{2}$ for the weighted floating case for $B_{1}=B_{3}$
and $B_{2}=B_{4}$ is the intersection point (Weighted
Fermat-Torricelli point) of the two diagonals (Section~4,
Theorem~\ref{diagonalquad}).

\section{The weighted Fermat-Torricelli problem for a tetragon: The case  $B_{1}=B_{2}$ and $B_{3}=B_{4}.$ }

We consider the weighted Fermat-Torricelli problem for a tetragon
$A_{1}A_{2}A_{3}A_{4},$ for $B_{1}>B_{4},$ $B_{1}=B_{2}$ and
$B_{3}=B_{4}.$

We denote by $a_{ij}$ the length of the linear segment $A_iA_j,$
$O$ the intersection point of $A_{1}A_{3}$ and $A_{2}A_{4},$ $y$
the length of the linear segment $OA_{0}$ and $\alpha_{ikj}$ the
angle $\angle A_{i}A_{k}A_{j}$ for $i,j,k=0,1,2,3,4, i\neq j\neq
k$ (See fig.~\ref{fig1}) and we set
$a_{12}=a_{23}=a_{34}=a_{41}=a.$

\begin{figure}
\centering
\includegraphics[scale=0.2]{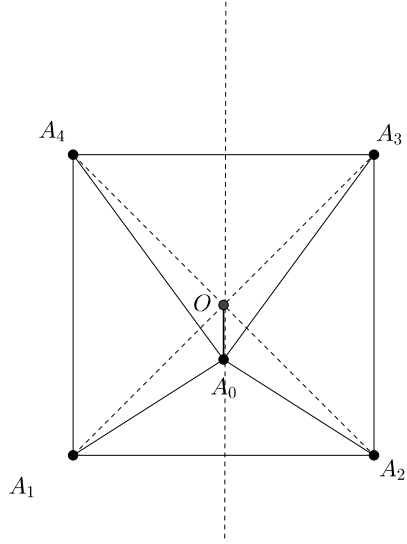}
\caption{The weighted Fermat-Torricelli problem for a tetragon
$B_{1}=B_{2}$ and $B_{3}=B_{4}$ for $B_{1}>B_{4}$}\label{fig1}
\end{figure}

\begin{problem}\label{sym1}
Given a tetragon $A_{1}A_{2}A_{3}A_{4}$ and a weight $B_{i}$ which
corresponds to the vertex $A_{i},$ for $i=1,2,3,4,$ find a fifth
point $A_{0}$ (weighted Fermat-Torricelli point) which minimizes
the objective function

\begin{equation}\label{obj1}
f=B_{1}a_{01}+B_{2} a_{02}+ B_{3} a_{03}+B_{4} a_{04}
\end{equation}
for $B_{1}>B_{4},$ $B_{1}=B_{2}$ and $B_{3}=B_{4}.$
\end{problem}

\begin{theorem}\label{theortetr}
The location of the weighted Fermat-Torricelli point of
$A_{1}A_{2}A_{3}A_{4}$ for $B_{1}=B_{2},$ $B_{3}=B_{4}$ and
$B_{1}>B_{4}$ is given by:

\begin{eqnarray}\label{analsoltetragon}
&&y=\frac{1}{2} \sqrt{\frac{a^2}{4}+r}-\nonumber{}\\
&&{}-\frac{1}{2} \sqrt{\frac{a^2}{4}-\frac{t^{1/3}}{24\ 2^{1/3}
q^{1/3}}-\frac{25 p q^{1/3}}{3\ 2^{2/3} t^{1/3}
\left(B_1^2-B_4^2\right){}^2} +\frac{a^2 B_1^2-a^2B_4^2}{12
\left(B_1^2-B_4^2\right)}-\frac{-a^3 B_1^2-a^3 B_4^2}{2
\sqrt{\frac{a^2}{4}+r} \left(B_1^2-B_4^2\right)}}\nonumber{}\\
\end{eqnarray}

where

\begin{eqnarray}\label{analsoltetragon1}
&&t=2000 a^6 B_1^6-2544 a^6 B_1^4 B_4^2+2544 a^6 B_1^2 B_4^4-2000
a^6 B_4^6+\nonumber\\
&&{}+192 \sqrt{3} \sqrt{a^{12} B_1^2 B_4^2
\left(B_1^2-B_4^2\right){}^2 \left(125 B_1^4-142 B_1^2 B_4^2+125
B_4^4\right)},
\end{eqnarray}

\begin{eqnarray}\label{analsoltetragon2}
p=a^4 B_1^4-2 a^4 B_1^2 B_4^2+a^4 B_4^4,
\end{eqnarray}

\begin{eqnarray}\label{analsoltetragon3}
q=B_1^6-3 B_1^4 B_4^2+3 B_1^2 B_4^4-B_4^6
\end{eqnarray}
and
\begin{eqnarray}\label{analsoltetragon4}
r=\frac{t^{1/3}}{24\ 2^{1/3} q^{1/3}}+\frac{25 p q^{1/3}}{3\
2^{2/3} t^{1/3} \left(B_1^2-B_4^2\right){}^2}-\frac{a^2 B_1^2-a^2
B_4^2}{12 \left(B_1^2-B_4^2\right)}.
\end{eqnarray}

\end{theorem}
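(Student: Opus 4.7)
The strategy is to exploit the pair symmetry $B_1=B_2$, $B_3=B_4$ together with the equal-side condition $a_{12}=a_{23}=a_{34}=a_{41}=a$ to reduce the planar Fermat--Torricelli problem to a one-variable minimization in $y=\abs{OA_0}$, and then to solve the resulting algebraic equation by Ferrari's method for the quartic. I would place $O$ at the origin with the diagonal $A_1A_3$ along one axis and $A_2A_4$ along the other. By the floating equilibrium condition (\ref{floatingequlcond}), the point $A_0$ must satisfy
\[
B_1\bigl(\vec u(A_0,A_1)+\vec u(A_0,A_2)\bigr)+B_4\bigl(\vec u(A_0,A_3)+\vec u(A_0,A_4)\bigr)=\vec 0,
\]
and each bracketed sum points along the internal bisector of $\alpha_{102}$, respectively $\alpha_{304}$, at $A_0$ with magnitude $2\cos(\alpha_{102}/2)$, respectively $2\cos(\alpha_{304}/2)$. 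The direction half of this condition forces the two bisectors to be antiparallel and, together with the equal-side condition, pins $A_0$ to a specific line $\ell$ through $O$; this is what reduces the problem to the single variable $y$, namely the signed distance of $A_0$ from $O$ along $\ell$, and simultaneously explains why the final formula depends on only $a$, $B_1$, $B_4$.

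By the Pythagorean theorem each $a_{0i}^{2}$ then becomes a quadratic polynomial in $y$ with coefficients expressible in $a$, $B_1$, $B_4$. Differentiating $f(y)=B_1(a_{01}+a_{02})+B_4(a_{03}+a_{04})$ and setting $f'(y)=0$ produces
\[
B_1\frac{L_1(y)}{\sqrt{Q_1(y)}}+B_1\frac{L_2(y)}{\sqrt{Q_2(y)}}+B_4\frac{L_3(y)}{\sqrt{Q_3(y)}}+B_4\frac{L_4(y)}{\sqrt{Q_4(y)}}=0,
\]
with $L_j$ linear and $Q_j$ quadratic in $y$. Iterated isolation and squaring clears the four radicals, and the pair structure causes enough cancellation that the resulting polynomial in $y$ collapses to degree exactly four, with coefficients polynomial in $a$, $B_1^2$, $B_4^2$.

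I would then apply Ferrari's method to this quartic: depress it by a linear substitution, form the resolvent cubic, solve that by Cardano's formula, and complete the square to factor the depressed quartic as a product of two quadratics. The Cardano step produces the cube root $t^{1/3}$; the discriminant of the resolvent cubic becomes precisely the radicand under the square root inside $t$ in (\ref{analsoltetragon1}); the quantity $r$ in (\ref{analsoltetragon4}) is the real root of the resolvent cubic thus produced; and the two nested square roots in (\ref{analsoltetragon}) correspond to Ferrari's two quadratic factors. The sign combination is forced by the requirement that $A_0$ lie strictly inside the tetragon and satisfy the floating condition (\ref{floatingcase}), which selects the unique geometrically meaningful root.

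The principal obstacle is the algebraic bookkeeping in the second step: verifying that after all the required squarings the polynomial really collapses to degree four (where the pair structure is essential) and that the resulting quartic coefficients, when fed into Ferrari's algorithm, simplify to the precise expressions $p$, $q$, $t$, $r$ in (\ref{analsoltetragon1})--(\ref{analsoltetragon4}). A secondary but nontrivial difficulty is branch selection among the up to four real roots of the quartic: one must argue geometrically, using the floating condition and the orientation of $\ell$, that the particular sign pattern in (\ref{analsoltetragon}) picks out the Fermat--Torricelli root rather than a saddle, a maximum, or an extraneous root introduced by squaring.
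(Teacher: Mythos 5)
Your plan is essentially the paper's proof: reduce to a one-variable minimization in $y$ along the common perpendicular bisector of $A_1A_2$ and $A_3A_4$ through $O$, set the derivative to zero, clear radicals to obtain the quartic $8(B_1^2-B_4^2)y^4-2a^2(B_1^2-B_4^2)y^2-2a^3(B_1^2+B_4^2)y+a^4(B_1^2-B_4^2)=0$, and apply Ferrari's method, selecting the real root that places $A_0$ inside the tetragon. Two remarks. First, the paper obtains the line $\ell$ more directly than your bisector argument: since $a_{12}=a_{23}=a_{34}=a_{41}=a$ the configuration (points and weights) is invariant under reflection in that perpendicular bisector, so by uniqueness of the weighted Fermat--Torricelli point $A_0$ must lie on it; your route through the equilibrium condition reaches the same line but needs the uniqueness argument anyway to be airtight. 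Second, the ``principal obstacle'' you identify does not arise: on $\ell$ one has $a_{01}=a_{02}$ and $a_{03}=a_{04}$ identically, so the objective is $2\left(B_1a_{01}+B_4a_{04}\right)$ with only two radicals, and a single isolation-and-squaring yields the degree-four equation exactly --- there is no iterated squaring and no degree collapse to verify. With that simplification your outline matches the paper's proof step for step.
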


\begin{proof}[Proof of Theorem~\ref{theortetr}:]

Taking into account the symmetry of the weights $B_{1}=B_{4}$ and
$B_{2}=B_{3}$ for $B_{1}>B_{4}$ and the symmetries of the tetragon
the objective function (\ref{obj1}) of the weighted
Fermat-Torricelli problem (Problem~\ref{sym1})  could be reduced
to an equivalent Problem by placing a wall to the midperpendicular
line from $A_{1}A_{2}$ and $A_{3}A_{4}$ which states that: Find a
point $A_{0}$ which belongs to the midperpendicular of
$A_{1}A_{2}$ and $A_{3}A_{4}$ and minimizes the objective function

\begin{equation}\label{obj12}
\frac{f}{2}=B_{1}a_{01}+B_{4} a_{04}.
\end{equation}

\begin{figure}
\centering
\includegraphics[scale=0.2]{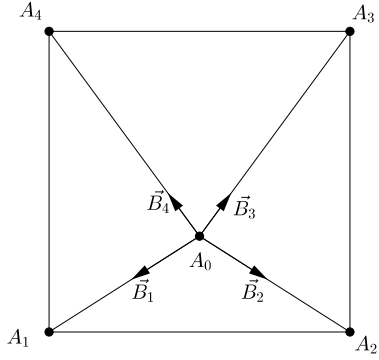}
\caption{The weighted floating equilibrium point (weighted
Fermat-Torricelli point) $A_{0}$ for a tetragon $B_{1}=B_{2}$ and
$B_{3}=B_{4}$ for $B_{1}>B_{4}$}\label{fig2}
\end{figure}

We express $a_{01},$ $a_{02},$ $a_{03}$ and $a_{04}$ as a function
of $y:$

\begin{equation}\label{a01}
a_{01}^{2}=\left(\frac{a}{2}\right)^{2}+\left(\frac{a}{2}-y\right)^{2}
\end{equation}

\begin{equation}\label{a02}
a_{02}^{2}=\left(\frac{a}{2}\right)^{2}+\left(\frac{a}{2}-y\right)^{2}
\end{equation}

\begin{equation}\label{a03}
a_{03}^{2}=\left(\frac{a}{2}\right)^{2}+\left(\frac{a}{2}+y\right)^{2}
\end{equation}

\begin{equation}\label{a04}
a_{04}^{2}=\left(\frac{a}{2}\right)^{2}+\left(\frac{a}{2}+y\right)^{2}
\end{equation}

By replacing (\ref{a01}) and (\ref{a04}) in (\ref{obj12}) we get:

\begin{equation}\label{obj13}
B_{1}\sqrt{\left(\frac{a}{2}\right)^{2}+\left(\frac{a}{2}-y\right)^{2}}+B_{4}\sqrt{\left(\frac{a}{2}\right)^{2}+\left(\frac{a}{2}+y\right)^{2}}
\to min.
\end{equation}

By differentiating (\ref{obj13}) with respect to $y,$ and by
squaring both parts of the derived equation, we get:

\begin{equation}\label{fourth1}
\frac{B_{1}^2 \left(\frac{a}{2}-y\right)^{2}
}{\left(\frac{a}{2}\right)^{2}+\left(\frac{a}{2}-y\right)^{2}}=\frac{B_{4}^2
\left(\frac{a}{2}+y\right)^{2}}{\left(\frac{a}{2}\right)^{2}+\left(\frac{a}{2}+y\right)^{2}}
\end{equation}

or

\begin{equation}\label{fourth2}
8 \left(B_{1}^{2}-B_{4}^2\right) y^4+2 a^2
\left(-B_{1}^2+B_{4}^2\right) y^2-2 a^3
\left(B_{1}^2+B_{4}^2\right) y+a^4 \left(B_{1}^2-B_{4}^2\right)=0.
\end{equation}

By solving the fourth order equation with respect to $y,$ we
derive two complex solutions and two real solutions (Ferrari's
solution, see also in \cite{Shmakov:11}) which depend on $B_{1},
B_{4}$ and $a.$ One of the two real solutions with respect to $y$
is (\ref{analsoltetragon}). From (\ref{analsoltetragon}), we
obtain that the weighted Fermat-Torricelli point $A_{0}$ is
located at the interior of $A_{1}A_{2}A_{3}A_{4}$ (see
fig.~\ref{fig2}).

\end{proof}


The Complementary Fermat-Torricelli problem was stated by Courant
and Robbins (see in \cite[pp.~358]{Cour/Rob:51}) for a triangle
which is derived by the weighted Fermat-Torricelli problem by
placing one negative weight to one of the vertices of the triangle
and asks for the complementary weighted Fermat-Torricelli point
which minimizes the corresponding objective function.

We need to state the Complementary weighted Fermat-Torricelli
problem for a tetragon, in order to explain the second real
solution which have been obtained by (\ref{fourth2}) with respect
to $y.$

\begin{problem}\label{sym2complementary}
Given a tetragon $A_{1}A_{2}A_{3}A_{4}$ and a weight $B_{i}$ (a
positive or negative real number) which corresponds to the vertex
$A_{i},$ for $i=1,2,3,4,$ find a fifth point $A_{0}$ (weighted
Fermat-Torricelli point) which minimizes the objective function

\begin{equation}\label{obj1}
f=B_{1}a_{01}+B_{2} a_{02}+ B_{3} a_{03}+B_{4} a_{04}
\end{equation}
for $\|B_{1}\|>\|B_{4}\|,$ $B_{1}=B_{2}$ and $B_{3}=B_{4}.$
\end{problem}

\begin{proposition}\label{theortetrcomp1}
The location of the complementary weighted Fermat-Torricelli point
$A_{0}^{\prime}$ (solution of Problem~\ref{sym2complementary}) of
$A_{1}A_{2}A_{3}A_{4}$ for $B_{1}=B_{2}<0,$ $B_{3}=B_{4}<0$ and
$\|B_{1}\|>\|B_{4}\|$ coincides with the location of the
corresponding weighted Fermat-Torricelli point of
$A_{1}A_{2}A_{3}A_{4}$ for $B_{1}=B_{2}>0,$ $B_{3}=B_{4}>0$ and
$\|B_{1}\|>\|B_{4}\|.$

\end{proposition}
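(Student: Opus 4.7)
The plan is to re-run the derivation of Theorem~\ref{theortetr} verbatim, but with both pairs of weights replaced by their negatives, and to observe that the entire argument is insensitive to this sign flip. First, I would invoke the same symmetry reduction used in the proof of Theorem~\ref{theortetr}: since $a_{12}=a_{23}=a_{34}=a_{41}=a$ and $B_{1}=B_{2}$, $B_{3}=B_{4}$, reflecting across the common perpendicular bisector of $A_{1}A_{2}$ and $A_{3}A_{4}$ is a symmetry of the objective function $f$, regardless of the sign of the $B_{i}$'s. By uniqueness of an interior critical point on the axis of symmetry (which, here, plays the role that the Kupitz--Martini theorem played in the positive-weight case), any candidate $A_{0}^{\prime}$ must lie on that line, so I can parametrize it by $y = |OA_{0}^{\prime}|$ exactly as before and reuse the distance formulas (\ref{a01})--(\ref{a04}).

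The next step is to write the reduced necessary condition $\partial(f/2)/\partial y = 0$, which in the positive case led to (\ref{fourth1}). For the complementary case I set $B_{1}=-|B_{1}|$ and $B_{4}=-|B_{4}|$; the two summands in the derivative each acquire an overall factor of $-1$, so their sum vanishes if and only if the positive-weight derivative vanishes. Squaring therefore yields the very same quartic (\ref{fourth2}), a fact that is already visible from that equation itself, since its coefficients involve only $B_{1}^{2}$ and $B_{4}^{2}$. Consequently the set of real roots in $y$ is identical in both cases, and in particular the root lying in $(0,a/2)$ — the one given by (\ref{analsoltetragon}) and shown to be interior to $A_{1}A_{2}A_{3}A_{4}$ — coincides with the analogous interior root for the all-negative configuration. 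This identifies $A_{0}^{\prime}$ with $A_{0}$.

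The only genuinely delicate point, and what I would flag as the main obstacle, is interpretive rather than computational: when all four weights are negative the objective function $f$ is unbounded below as $X\to\infty$, so the classical minimization formulation has no global solution. I would resolve this exactly as Courant--Robbins implicitly do in \cite{Cour/Rob:51}, by defining the complementary weighted Fermat--Torricelli point to be the unique finite critical point of $f$ produced by the first-order system analogous to (\ref{floatingequlcond}); the symmetry reduction above shows this critical point exists, is unique on the axis of symmetry, and is given by (\ref{analsoltetragon}). With that convention fixed, the coincidence $A_{0}^{\prime}=A_{0}$ follows immediately from the sign-invariance of (\ref{fourth2}).
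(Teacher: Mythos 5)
Your argument is correct and rests on the same core observation as the paper's proof: negating all four weights multiplies the first-order condition by $-1$ and therefore leaves the critical point unchanged. The paper expresses this at the level of the vector equilibrium condition, writing (\ref{compl1}) and its negation (\ref{compl2}) and noting that only the directions of the vectors $\vec{B}_{i}$ reverse, whereas you express it at the level of the reduced scalar derivative and the quartic (\ref{fourth2}), whose coefficients depend only on $B_{1}^{2}$ and $B_{4}^{2}$; these are two formulations of the same sign-invariance. Your closing remark is the one substantive addition: when all four weights are negative the objective $f$ is unbounded below, so the ``complementary weighted Fermat--Torricelli point'' can only mean the finite critical point determined by the analogue of (\ref{floatingequlcond}), not a global minimizer. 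The paper silently adopts this convention without stating it, and making it explicit, as you do, closes a small interpretive gap in the original argument.
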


\begin{proof}[Proof of Proposition~\ref{theortetrcomp1}:]
By applying theorem~\ref{theortetr} for  $B_{1}=B_{2}<0,$
$B_{3}=B_{4}<0$ we derive the weighted floating equilibrium
condition (see fig.~\ref{fig3}):

\begin{figure}
\centering
\includegraphics[scale=0.2]{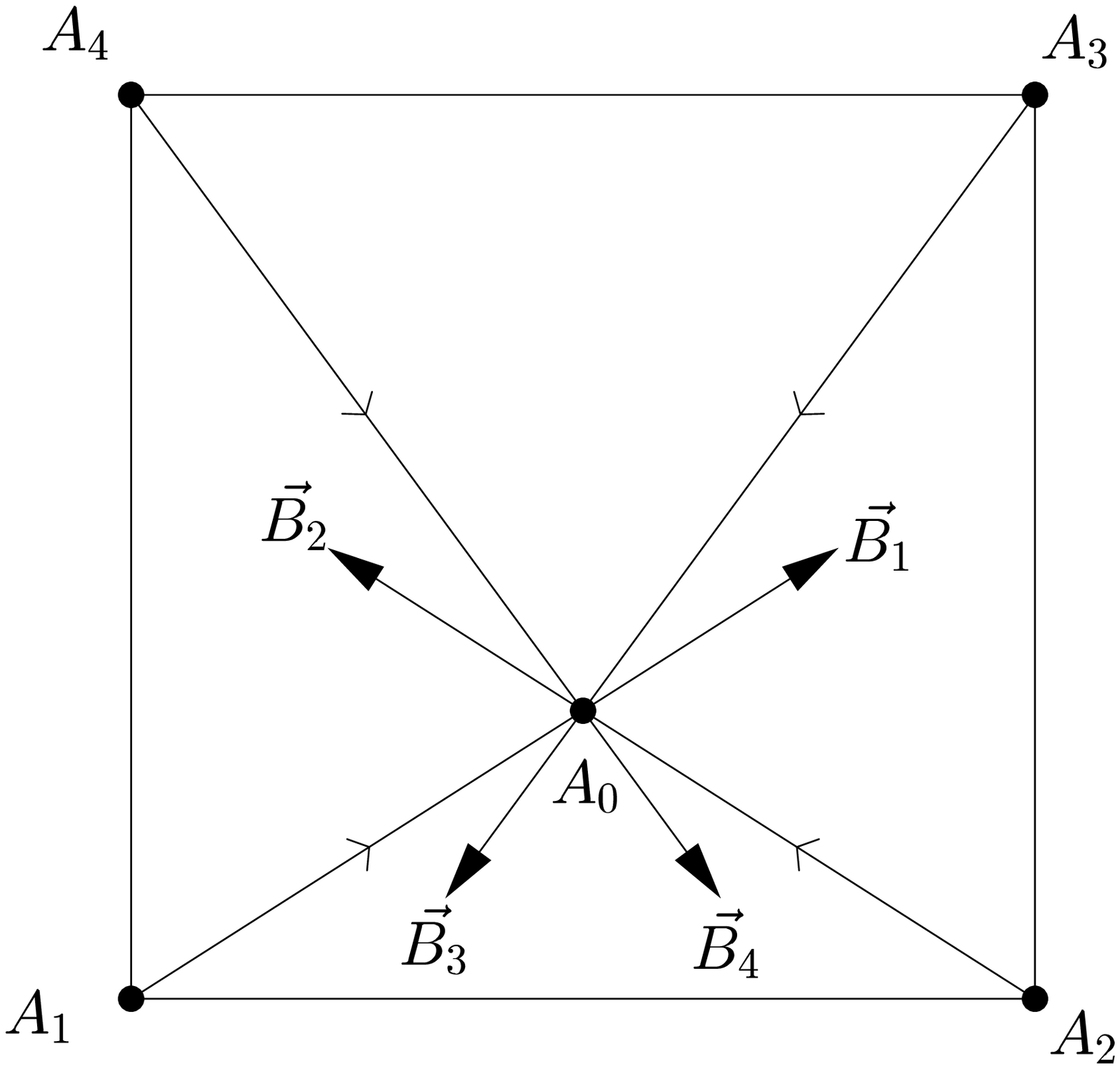}
\caption{The complementary weighted Fermat-Torricelli point
$A_{0}^{\prime}$ for a tetragon $B_{1}=B_{2}<0$ and
$B_{3}=B_{4}<0$ for $\|B_{1}\|>\|B_{4}\|$}\label{fig3}
\end{figure}

\begin{equation}\label{compl1}
\vec{B_{1}}+\vec{B_{2}}+\vec{B_{3}}+\vec{B_{4}}=\vec{0}
\end{equation}

or

\begin{equation}\label{compl2}
(-\vec{B_{1}})+(-\vec{B_{2}})+(-\vec{B_{3}})+(-\vec{B_{4}})=\vec{0}.
\end{equation}

From (\ref{compl1}) and (\ref{compl2}), we derive that the
complementary weighted Fermat-Torricelli point $A_{0}^{\prime}$
coincides with the weighted Fermat-Torricelli point $A_{0}.$ The
difference between the figures~\ref{fig2} and ~\ref{fig3} is that
the vectors $\vec{B}_{i}$  change direction from $A_{i}$ to
$A_{0},$ for $i=1,2,3,4.$

\end{proof}

\begin{proposition}\label{theortetrcomp2}
The location of the complementary weighted Fermat-Torricelli point
$A_{0}^{\prime}$ (solution of Problem~\ref{sym2complementary}) of
$A_{1}A_{2}A_{3}A_{4}$ for $B_{1}=B_{2}<0,$ $B_{3}=B_{4}>0$ or
$B_{1}=B_{2}>0,$ $B_{3}=B_{4}<0$ and  $\|B_{1}\|>\|B_{4}\|$ is
given by:

\begin{eqnarray}\label{analsoltetragoncom}
&&y=\frac{\sqrt{d}}{2}+\frac{1}{2} \nonumber\\
&&{}\sqrt{-\frac{\frac{2\ 2^{1/3}
w}{\left(\sqrt{s}+z\right)^{1/3}}+2^{2/3}
\left(\sqrt{s}+z\right)^{1/3}+32 a \left(2+a
\left(-2-\frac{3}{\sqrt{d}}\right)\right) B_1^2+32 a \left(-2+2
a-\frac{3 a}{\sqrt{d}}\right) B_4^2}{96
\left(B_1^2-B_4^2\right)}}.\nonumber\\
\end{eqnarray}

where

\begin{eqnarray}\label{analsoltetragon1com}
&&z=-1024 \left(-a B_1^2+a^2 B_1^2+a B_4^2-a^2
B_4^2\right){}^3+27648 \left(B_1^2-B_4^2\right) \left(a^2
B_1^2+a^2 B_4^2\right){}^2+\nonumber\\
&&{}+9216 \left(B_1^2-B_4^2\right) \left(-a B_1^2+a^2 B_1^2+a
B_4^2-a^2 B_4^2\right) \left(2 a^3 B_1^2+a^4 B_1^2-2 a^3 B_4^2-a^4
B_4^2\right)\nonumber\\
\end{eqnarray}

\begin{eqnarray}\label{analsoltetragon2com}
w=64 \left(-a B_1^2+a^2 B_1^2+a B_4^2-a^2 B_4^2\right){}^2+192
\left(B_1^2-B_4^2\right) \left(2 a^3 B_1^2+a^4 B_1^2-2 a^3
B_4^2-a^4 B_4^2\right),\nonumber\\
\end{eqnarray}

\begin{eqnarray}\label{analsoltetragon3com}
&&s=-4 w^3+(-1024 \left(-a B_1^2+a^2 B_1^2+a B_4^2-a^2
B_4^2\right){}^3+27648 \left(B_1^2-B_4^2\right) \left(a^2
B_1^2+a^2 B_4^2\right){}^2+\nonumber\\
&&{}+9216 \left(B_1^2-B_4^2\right) \left(-a B_1^2+a^2 B_1^2+a
B_4^2-a^2 B_4^2\right) \left(2 a^3 B_1^2+a^4 B_1^2-2 a^3 B_4^2-a^4
B_4^2\right)){}^2
\end{eqnarray}
and
\begin{eqnarray}\label{analsoltetragon4com}
&&d=\frac{1}{2} \left(-a+a^2\right)+\frac{w}{24\ 2^{2/3}
\left(\sqrt{s}+z\right)^{1/3}
\left(B_1^2-B_4^2\right)}+\frac{\left(\sqrt{s}+z\right)^{1/3}}{48\
2^{1/3} \left(B_1^2-B_4^2\right)}-\nonumber\\
&&{}-\frac{-a B_1^2+a^2 B_1^2+a B_4^2-a^2 B_4^2}{6
\left(B_1^2-B_4^2\right)}.
\end{eqnarray}

\end{proposition}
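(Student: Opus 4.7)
The plan is to follow the template of Theorem~\ref{theortetr}, adjusting only for the sign configuration of the weights and for the selection of a different real root from Ferrari's quartic.

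First I would observe that the symmetry reduction used in the proof of Theorem~\ref{theortetr} remains valid for Problem~\ref{sym2complementary}. Since the tetragon $A_1A_2A_3A_4$ admits the reflection swapping $A_1\leftrightarrow A_2$ and $A_3\leftrightarrow A_4$, and since the equalities $B_1=B_2$ and $B_3=B_4$ persist regardless of signs, the signed objective $f$ is invariant under this reflection. Hence $A_0'$ must lie on the common midperpendicular of $A_1A_2$ and $A_3A_4$, and Problem~\ref{sym2complementary} reduces to the one-variable minimization of $\tfrac{f}{2}=B_1 a_{01}+B_4 a_{04}$ as a function of $y=OA_0'$, with $a_{01}$ and $a_{04}$ given by (\ref{a01}) and (\ref{a04}).

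Differentiating this reduced objective with respect to $y$ and setting the derivative to zero gives $B_1(\tfrac{a}{2}-y)/a_{01}=B_4(\tfrac{a}{2}+y)/a_{04}$. Because $B_1$ and $B_4$ now carry opposite signs, the branch of the stationarity equation that is picked out geometrically differs from the weighted floating case, but after squaring to rationalize, the resulting polynomial identity is structurally of the same type as (\ref{fourth1})--(\ref{fourth2}). I would apply Ferrari's formula to its four roots, two of which are real and two complex. Theorem~\ref{theortetr} selects the real root that places the Fermat--Torricelli point inside $A_1A_2A_3A_4$; Proposition~\ref{theortetrcomp2} must select the other real root, and carrying out Ferrari's assembly with the alternative choices of sign in the resolvent-cubic step should produce the closed-form expression (\ref{analsoltetragoncom})--(\ref{analsoltetragon4com}).

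The main obstacle will be identifying and justifying the correct real Ferrari root for the mixed-sign regime $\|B_1\|>\|B_4\|$. I would substitute each real root back into the unsquared stationarity condition to discard the spurious solutions introduced by squaring, and check the sign of the second derivative of $\tfrac{f}{2}$ to distinguish a minimum from a maximum (noting that with a negative weight the signed objective is not convex and not coercive, so this verification is essential). Finally, I would argue geometrically, in the Courant--Robbins spirit, that the selected root places $A_0'$ on the side of $O$ opposite to the attractive pair and displaced toward the repulsive pair, which pins down the uniqueness of the chosen branch. Once the correct root is fixed, repackaging Ferrari's general formula into the specific form stated in the proposition is a direct algebraic computation.
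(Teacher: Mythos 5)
Your proposal follows essentially the same route as the paper: the same symmetry reduction to the midperpendicular, differentiation and squaring to arrive at the very same quartic (\ref{fourth2}), and selection of the second real Ferrari root, which the paper then identifies as lying outside the tetragon. The additional verification steps you outline (discarding spurious roots from squaring, the second-derivative check) go beyond what the paper records but do not change the argument.
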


\begin{proof}[Proof of Proposition~\ref{theortetrcomp2}:]

Taking into account (\ref{obj13}) for $B_{1}=B_{2}<0,$
$B_{3}=B_{4}>0$ or $B_{1}=B_{2}>0,$ $B_{3}=B_{4}<0$ and
$\|B_{1}\|>\|B_{4}\|$ and differentiating (\ref{obj13}) with
respect to $y\equiv OA_{0}^{\prime},$ and by squaring both parts
of the derived equation, we obtain (\ref{fourth2}) which is a
fourth order equation with respect to $y.$ The second real
solution of $y$ gives (\ref{analsoltetragoncom}). From
(\ref{analsoltetragoncom}) and the vector equilibrium condition
$\vec{B_{1}}+\vec{B_{2}}+\vec{B_{3}}+\vec{B_{4}}=\vec{0}$ we
obtain that the complementary weighted Fermat-Torricelli point
$A_{0}^{\prime}$ for $B_{1}=B_{2}<0,$ $B_{3}=B_{4}>0$ coincides
with the complementary weighted Fermat-Torricelli point
$A_{0}^{\prime\prime}$ for $B_{1}=B_{2}>0,$ $B_{3}=B_{4}<0$ (
Fig.~\ref{fig4} and ~\ref{fig5}). Furthermore, the solution
(\ref{analsoltetragoncom}) yields that the complementary
$A_{0}^{\prime}$ is located outside the tetragon
$A_{1}A_{2}A_{3}A_{4}$ (Fig.~\ref{fig4} and ~\ref{fig5}).

\begin{figure}
\centering
\includegraphics[scale=0.2]{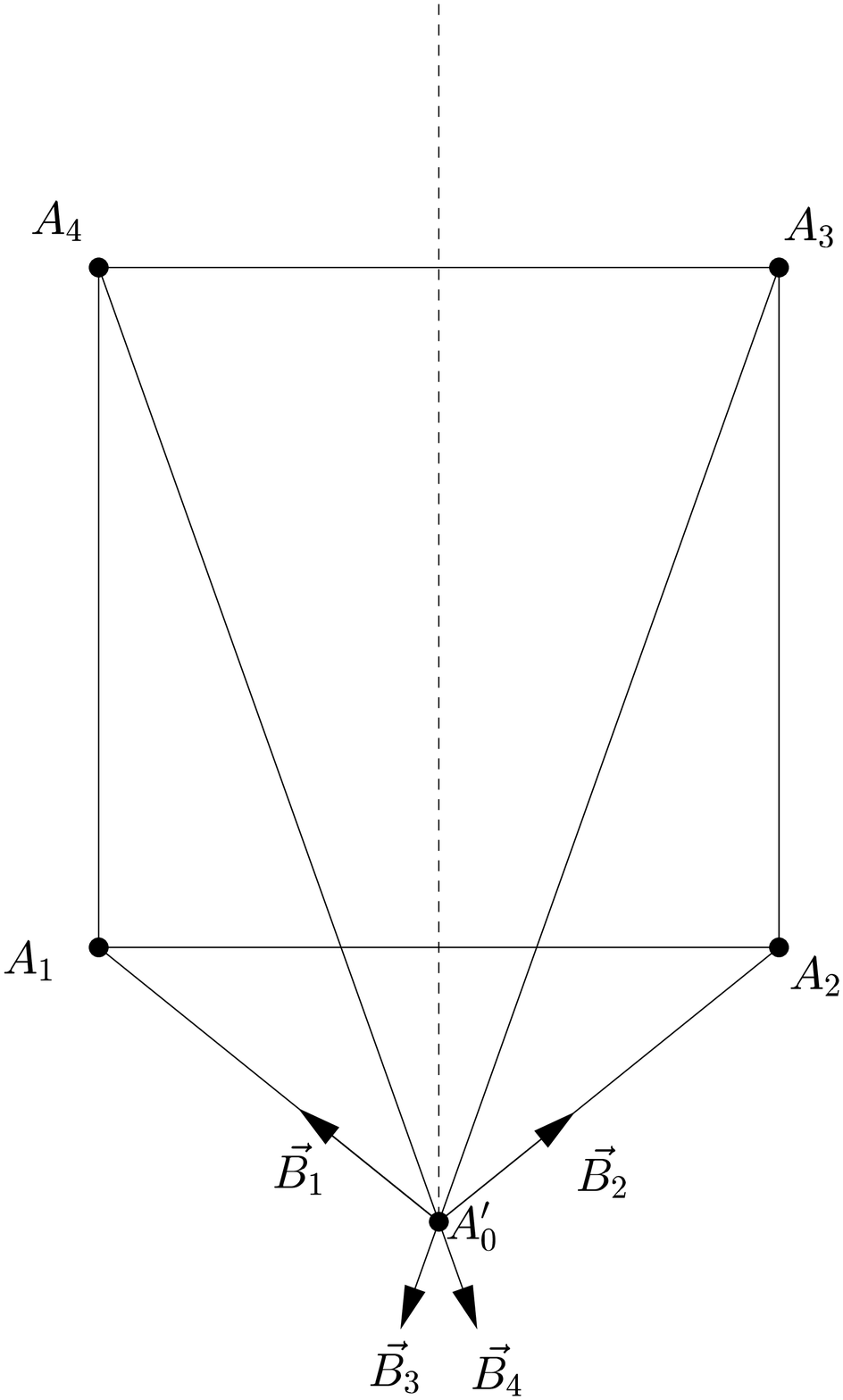}
\caption{The complementary weighted Fermat-Torricelli point
$A_{0}^{\prime}$ for a tetragon $B_{1}=B_{2}>0$ and
$B_{3}=B_{4}<0$ for $\|B_{1}\|>\|B_{4}\|$}\label{fig4}
\end{figure}

\begin{figure}
\centering
\includegraphics[scale=0.2]{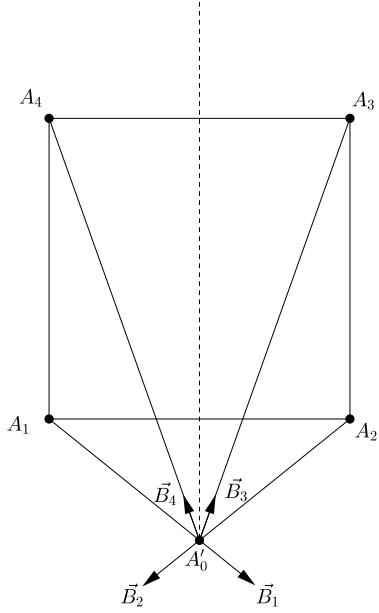}
\caption{The complementary weighted Fermat-Torricelli point for a
tetragon $B_{1}=B_{2}<0$ and $B_{3}=B_{4}>0$ for
$\|B_{1}\|>\|B_{4}\|$}\label{fig5}
\end{figure}

\end{proof}

\begin{example}\label{tetr1}
Given a tetragon $A_{1}A_{2}A_{3}A_{4}$ in $\mathbb{R}^{2},$ $a=2,
B_{1}=B_{2}=1.5,$ $B_{3}=B_{4}=1$ from \ref{analsoltetragon} and
(\ref{analsoltetragoncom}) we get $y=0.36265$ and $y=1.80699,$
respectively, with five digit precision. The weighted
Fermat-Torricelli point $A_{0}$ and the complementary weighted
Fermat-Torricelli point $A_{0^{\prime}}\equiv A_{0}$ for
$B_{1}=B_{2}=-1.5$ and $B_{3}=B_{4}=-1$ corresponds to
$y=0.36265.$ The complementary weighted Fermat-Torricelli point
$A_{0}^{\prime}$ for $B_{1}=B_{2}=-1.5$ and $B_{3}=B_{4}=1$ or
$B_{1}=B_{2}=1.5$ and $B_{3}=B_{4}=-1$ lies outside the tetragon
$A_{1}A_{2}A_{3}A_{4}$ and corresponds to $y=1.80699$
\end{example}
We denote by $A_{12}$ the intersection point of the
midperpendicular of $A_{1}A_{2}$ and $A_{3}A_{4}$ with
$A_{1}A_{2}$ and by $A_{14}$ the intersection point of the
perpendicular from $A_{0}$ to the line defined by $A_{1}A_{4}.$

We shall calculate the angles $\alpha_{102}, \alpha_{203},
\alpha_{304}$ and $\alpha_{401}.$

\begin{proposition}\label{anglestetragon}
The angles $\alpha_{102},$ $\alpha_{203},$ $\alpha_{304}$ and
$\alpha_{401}$ are given by:

\begin{equation}\label{alpha102}
\alpha_{102}=2\arccos{\frac{\frac{a}{2}-y(B_{1},B_{4},a)}{\sqrt{\left(\frac{a}{2}\right)^{2}+\left(\frac{a}{2}-y\right)^{2}}}},
\end{equation}

\begin{equation}\label{alpha304}
\alpha_{304}=2\arccos{\left(\frac{B_{1}}{B_{4}}\cos\frac{\alpha_{102}}{2}\right)}
\end{equation}

and

\begin{equation}\label{alpha401}
\alpha_{401}=\alpha_{203}=\pi-\frac{\alpha_{102}}{2}-\frac{\alpha_{304}}{2}.
\end{equation}

\end{proposition}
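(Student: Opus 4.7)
The plan is to extract each angle from elementary trigonometry combined with the weighted floating equilibrium condition (\ref{floatingequlcond}) given in Theorem~\ref{theor1}. First, for (\ref{alpha102}), I would exploit the isoceles triangle $A_1 A_0 A_2$: since $A_0$ lies on the perpendicular bisector of $A_1 A_2$ (a fact already used in the reduction that produced (\ref{a01})--(\ref{a04})), one has $a_{01} = a_{02}$. The foot of the perpendicular from $A_0$ onto $A_1 A_2$ is $A_{12}$, yielding a right triangle with legs $|A_1 A_{12}| = a/2$ and $|A_0 A_{12}| = a/2 - y$ and hypotenuse $a_{01}$. The half-angle at $A_0$ then satisfies $\cos(\alpha_{102}/2) = (a/2-y)/a_{01}$, and substitution of $a_{01}$ from (\ref{a01}) gives (\ref{alpha102}).

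For (\ref{alpha304}), I would project the equilibrium equation $\sum_{i=1}^{4} B_{i}\vec u(A_0, A_i)=\vec 0$ onto the symmetry axis through $A_{12}$ and $A_{34}$. The reflection symmetry of the configuration together with $B_1=B_2$ and $B_3=B_4$ forces the components transverse to this axis to cancel automatically. Along the axis, oriented from $A_{34}$ toward $A_{12}$, each of $\vec u(A_0, A_1)$ and $\vec u(A_0, A_2)$ contributes $\cos(\alpha_{102}/2)$, while each of $\vec u(A_0, A_3)$ and $\vec u(A_0, A_4)$ contributes $-\cos(\alpha_{304}/2)$. Summing with weights yields $B_1 \cos(\alpha_{102}/2) = B_4 \cos(\alpha_{304}/2)$, which rearranges directly to (\ref{alpha304}); the equilibrium itself guarantees that the argument of $\arccos$ lies in $[-1,1]$.

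For (\ref{alpha401}), the four angles $\alpha_{102},\alpha_{203},\alpha_{304},\alpha_{401}$ around the interior point $A_0$ sum to $2\pi$. The reflection across $A_{12}A_{34}$ swaps $A_1\leftrightarrow A_2$ and $A_3\leftrightarrow A_4$, forcing $\alpha_{401}=\alpha_{203}$. Combining these observations gives $2\alpha_{401} = 2\pi - \alpha_{102} - \alpha_{304}$, which is (\ref{alpha401}).

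The only conceptual subtlety is the assertion that $A_0$ lies on the common perpendicular bisector of $A_1A_2$ and $A_3A_4$; this is not a new obstacle, because it was already invoked in obtaining (\ref{a01})--(\ref{a04}), and it follows at once from the uniqueness clause in Theorem~\ref{theor1}(a) together with the invariance of the objective function (\ref{obj1}) under that reflection. With this in hand, the argument is purely trigonometric and the ``hard part'' is really just a bookkeeping check of the signs of the axial components of the unit vectors $\vec u(A_0,A_i)$ when $A_0$ lies between $O$ and $A_{12}$, as guaranteed by (\ref{analsoltetragon}).
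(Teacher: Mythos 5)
Your proposal is correct, and parts (\ref{alpha102}) and (\ref{alpha401}) match the paper's argument (right triangle $\triangle A_{1}A_{12}A_{0}$ for the first; congruence of $\triangle A_{1}A_{0}A_{4}$ and $\triangle A_{2}A_{0}A_{3}$ under the reflection, plus the angle sum $2\pi$ at $A_{0}$, for the last). Where you genuinely diverge is in the derivation of (\ref{alpha304}). You obtain the balancing relation $B_{1}\cos\frac{\alpha_{102}}{2}=B_{4}\cos\frac{\alpha_{304}}{2}$ by projecting the already-established weighted floating equilibrium condition (\ref{floatingequlcond}) onto the symmetry axis through $A_{12}$ and $A_{34}$, using the reflection symmetry to kill the transverse components. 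The paper instead re-derives the first-order optimality condition directly in angle coordinates: it writes $a_{01}=\frac{a}{2\sin(\alpha_{102}/2)}$, $a_{04}=\frac{a}{2\sin(\alpha_{304}/2)}$, obtains the geometric constraint $\cot\frac{\alpha_{102}}{2}+\cot\frac{\alpha_{304}}{2}=2$, rewrites the reduced objective as $\frac{B_{1}}{\sin(\alpha_{102}/2)}+\frac{B_{4}}{\sin(\alpha_{304}/2)}$, and differentiates along the constraint to reach the same relation. Your route is shorter and more conceptual (it is literally force balance along the axis, and it makes clear why the argument of the $\arccos$ lies in $[-1,1]$), but it leans on Theorem~\ref{theor1}(b$_2$); the paper's route is self-contained within the one-dimensional reduction already set up in Theorem~\ref{theortetr} and produces the auxiliary identity $\cot\frac{\alpha_{102}}{2}+\cot\frac{\alpha_{304}}{2}=2$ as a byproduct. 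Your closing remark about sign bookkeeping is well taken but not actually a restriction: the identity $\cos\frac{\alpha_{102}}{2}=\bigl(\frac{a}{2}-y\bigr)/a_{01}$ remains valid even if $y>\frac{a}{2}$, since the cosine simply becomes negative, so the formulas do not depend on $A_{0}$ lying between $O$ and $A_{12}$.
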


\begin{proof}[Proof of Proposition~\ref{anglestetragon}:]

From $\triangle A_{1}A_{12}A_{0}$ and taking into account
(\ref{analsoltetragon}), we get (\ref{alpha102}).

From the right angled triangles $\triangle A_{1}A_{12}A_{0},$
$\triangle A_{1}A_{14}A_{0}$ and $\triangle A_{4}A_{14}A_{0},$ we
obtain:

\begin{equation}\label{sin102bis}
a_{01}=\frac{a}{2\sin\frac{\alpha_{102}}{2}},
\end{equation}

\begin{equation}\label{sin304bis}
a_{04}=\frac{a}{2\sin\frac{\alpha_{304}}{2}},
\end{equation}

and

\begin{equation}\label{sin102304bis}
a_{01}\cos\frac{\alpha_{102}}{2}+a_{04}\cos\frac{\alpha_{304}}{2}=a,
\end{equation}

By dividing both members of (\ref{sin102304bis}) by
(\ref{sin102bis}) or (\ref{sin304bis}), we get:

\begin{equation}\label{cot102304bis}
\cot\frac{\alpha_{102}}{2}=2-\cot\frac{\alpha_{304}}{2}.
\end{equation}

From (\ref{cot102304bis}) the angle $\alpha_{102}$ is expressed as
a function of $\alpha_{304}:$
$\alpha_{102}=\alpha_{102}(\alpha_{304}).$

By replacing (\ref{sin102bis}) and (\ref{sin304bis}) in
(\ref{obj12}) we get:

\begin{equation}\label{objbis}
\frac{B_{1}}{\sin\frac{\alpha_{102}}{2}}+\frac{B_{4}}{\sin\frac{\alpha_{304}}{2}}\to
min.
\end{equation}

By differentiating (\ref{cot102304bis}) with respect to
$\alpha_{304},$ we derive:

\begin{equation}\label{dercot102304bis}
\frac{d\alpha_{102}}{d\alpha_{304}}=-\frac{\sin^{2}\frac{\alpha_{102}}{2}}{\sin^{2}\frac{\alpha_{304}}{2}}.
\end{equation}

By differentiating (\ref{objbis}) with respect to $\alpha_{304}$
and replacing in the derived equation (\ref{dercot102304bis}) we
obtain (\ref{alpha304}).

From the equality of triangles $\triangle A_{1}A_{0}A_{4}$ and
$A_{2}A_{0}A_{3},$ we get $\alpha_{401}=\alpha_{203}$ which yields
(\ref{alpha401}).

\end{proof}

\section{The weighted Fermat-Torricelli problem for convex quadrilaterals: The case  $B_{1}=B_{2}$ and $B_{3}=B_{4}.$ }

We need the following lemma, in order to find the weighted
Fermat-Torricelli point for a given convex quadrilateral
$A_{1}^{\prime}A_{2}^{\prime}A_{3}^{\prime}A_{4}^{\prime}$ in
$\mathbb{R}^{2},$ which has been proved in
\cite[Proposition~3.1,pp.~414]{Zachos/Zou:88} for convex polygons
in $\mathbb{R}^{2}.$

\begin{lemma}{\cite[Proposition~3.1,pp.~414]{Zachos/Zou:88} }\label{tetragonnn}
Let $A_1A_2A_{3}A_4$ be a tetragon in $\mathbb{R}^{2}$ and each
vertex $A_{i}$ has a non-negative weight $B_{i}$ for $i=1,2,3,4.$
Assume that the floating case of the weighted Fermat-Torricelli
point $A_{0}$ is valid:
\begin{equation}\label{floatingcasetetr1}
\|{\sum_{j=1, i\ne j}^{4}B_{j}\vec u(A_i,A_j)}\|>B_i.
\end{equation}
If $A_0$ is connected with every vertex $A_i$ for $i=1,2,3,4$ and
a point $A_{i}^{\prime}$ is selected with corresponding
non-negative weight $B_{i}$ on the ray that is defined by the line
segment $A_0A_i$ and the convex quadrilateral
$A_{1}^{\prime}A_{2}^{\prime}A_{3}^{\prime}A_{4}^{\prime}$ is
constructed such that:

\begin{equation}\label{floatingcasequad2}
\|{\sum_{j=1, i\ne j}^{4}B_{j}\vec
u(A_{i}^{\prime},A_{j}^{\prime})}\|>B_i,
\end{equation}
then the weighted Fermat-Torricelli point $A_{0}^{\prime}$ of
$A_{1}^{\prime}A_{2}^{\prime}A_{3}^{\prime}A_{4}^{\prime}$ is
identical with $A_{0}.$
\end{lemma}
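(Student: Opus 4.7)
The plan is to exploit the vector characterization of the weighted Fermat--Torricelli point given in Theorem~\ref{theor1}(b), specifically the floating equilibrium condition $\sum_{i=1}^{4} B_i \vec{u}(A_0, A_i) = \vec{0}$, which together with uniqueness pins down the Fermat--Torricelli point completely whenever the floating case holds. So the strategy is to verify that the original point $A_0$ already satisfies this same vector equation for the new configuration $A_{1}^{\prime}A_{2}^{\prime}A_{3}^{\prime}A_{4}^{\prime}$, and then invoke uniqueness.

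First, I would write down the hypothesis that $A_0$ is the Fermat--Torricelli point of $A_1A_2A_3A_4$ under the floating case, so by Theorem~\ref{theor1}(b$_2$) we have
\begin{equation*}
\sum_{i=1}^{4} B_i \vec{u}(A_0, A_i) = \vec{0}.
\end{equation*}
Next comes the crucial geometric observation: because $A_i^{\prime}$ is chosen on the ray emanating from $A_0$ through $A_i$, the two points $A_i$ and $A_i^{\prime}$ lie in the same direction when viewed from $A_0$, hence
\begin{equation*}
\vec{u}(A_0, A_i^{\prime}) = \vec{u}(A_0, A_i), \qquad i=1,2,3,4.
\end{equation*}
Substituting gives $\sum_{i=1}^{4} B_i \vec{u}(A_0, A_i^{\prime}) = \vec{0}$, so $A_0$ satisfies the floating equilibrium condition for the new quadrilateral as well.

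Finally, I would apply the hypothesis (\ref{floatingcasequad2}), which guarantees that the Fermat--Torricelli problem for $A_{1}^{\prime}A_{2}^{\prime}A_{3}^{\prime}A_{4}^{\prime}$ is itself in the floating case, together with the existence and uniqueness statement of Theorem~\ref{theor1}(a) and the characterization in Theorem~\ref{theor1}(b$_2$). The equilibrium condition is both necessary and sufficient for being the Fermat--Torricelli point in the floating regime, and the minimizer is unique, so $A_0$ must coincide with $A_{0}^{\prime}$.

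There is essentially no hard step here; the proof is a one-line application of the invariance of unit vectors along a ray combined with Kupitz--Martini's equilibrium characterization. The only point that genuinely needs the extra hypothesis (\ref{floatingcasequad2}) is to rule out the absorbed case for the perturbed quadrilateral, since without it $A_0$ could a priori fail to be the Fermat--Torricelli point even while satisfying the vector equation (the absorbed case would place $A_{0}^{\prime}$ at some $A_{i}^{\prime}$). That is the only subtlety to flag explicitly in the write-up.
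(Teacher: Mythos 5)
Your proof is correct and is exactly the standard argument for this invariance property: the unit vectors $\vec u(A_0,A_i^{\prime})=\vec u(A_0,A_i)$ are unchanged along each ray, so the floating equilibrium condition (\ref{floatingequlcond}) persists, and uniqueness of the minimizer in the floating case forces $A_0^{\prime}=A_0$. The paper itself gives no proof but only cites \cite[Proposition~3.1]{Zachos/Zou:88}, where the argument is the same as yours; your remark that hypothesis (\ref{floatingcasequad2}) is needed precisely to exclude the absorbed case for the new quadrilateral is the right subtlety to flag.
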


Let $A_{1}^{\prime}A_{2}^{\prime}A_{3}^{\prime}A_{4}^{\prime}$ be
a convex quadrilateral with corresponding non-negative weights
$B_{1}=B_{2}$ at the vertices $A_{1}^{\prime}, A_{2}^{\prime}$ and
$B_{3}=B_{4}$ at the vertices $A_{3}^{\prime}, A_{4}^{\prime}.$

We select $B_{1}$ and $B_{4}$ which satisfy the inequalities
(\ref{floatingcasetetr1}), (\ref{floatingcasequad2}) and
$B_{1}>B_{4},$ which correspond to the weighted floating case of
the tetragon $A_{1}A_{2}A_{3}A_{4}$ and
$A_{1}^{\prime}A_{2}^{\prime}A_{3}^{\prime}A_{4}^{\prime}.$
Furthermore, we assume that $A_{0}$ is located at the interior of
$\triangle A_{1}^{\prime}A_{2}^{\prime}A_{3}^{\prime}.$

We denote by $a_{ij}^{\prime}$ the length of the linear segment
$A_{i}^{\prime}A_{j}^{\prime},$  $\alpha_{ikj}^{\prime}$ the angle
$\angle A_{i}^{\prime}A_{k}^{\prime}A_{j}^{\prime}$ for
$i,j,k=0,1,2,3,4, i\neq j\neq k$ (See fig.~\ref{fig6})
\begin{figure}
\centering
\includegraphics[scale=0.2]{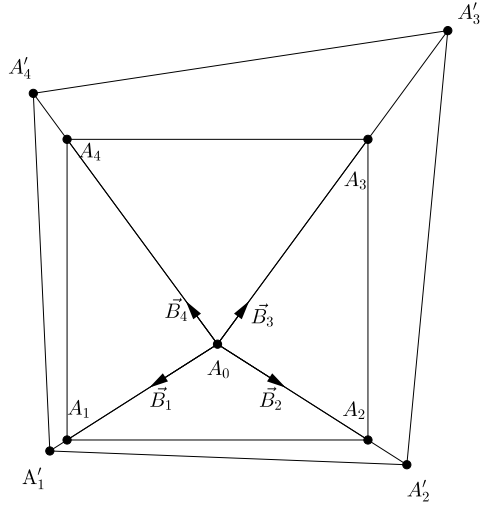}
\caption{The weighted Fermat-Torricelli point of a convex
quadrilateral for $B_{1}=B_{2},$ $B_{3}=B_{4}$ and
$B_{1}>B_{4}$}\label{fig6}
\end{figure}

\begin{theorem}\label{theorquadnn}
The location of the weighted Fermat-Torricelli point $A_{0}$ of
$A_{1}^{\prime}A_{2}^{\prime}A_{3}^{\prime}A_{4}^{\prime}$ for
$B_{1}=B_{2}$ and $B_{3}=B_{4}$ under the conditions
(\ref{floatingcasetetr1}), (\ref{floatingcasequad2}) and
$B_{1}>B_{4},$ is given by:

\begin{equation}\label{a02prime}
a_{02}^{\prime}=a_{12}^{\prime}\frac{\sin(\alpha_{213}^{\prime}-\alpha_{013}^{\prime})}{\sin\alpha_{102}}
\end{equation}

and

\begin{equation}\label{alpha120prime}
\alpha_{120}^{\prime}=\pi-\alpha_{102}-(\alpha_{123}^{\prime}-\alpha_{013}^{\prime}),
\end{equation}

where

\begin{equation} \label{eq:evquad3}
\alpha_{013}^{\prime}=\frac{\sin(\alpha_{213}^{\prime})-\cos(\alpha_{213}^{\prime})
\cot(\alpha_{102})- \frac{a_{31}^{\prime}}{a_{12}^{\prime}
}\cot(\alpha_{304}+\alpha_{401})}
{-\cos(\alpha_{213}^{\prime})-\sin(\alpha_{213}^{\prime})
\cot(a_{102})+ \frac{a_{31}^{\prime}}{a_{12}^{\prime}}}
\end{equation}

and

\begin{equation} \label{eq:evquad1}
\cot(\alpha_{304}+\alpha_{401})=
\frac{B_1+B_2\cos(\alpha_{102})+B_4\cos(\alpha_{401}))}{B_4\sin(\alpha_{401})-B_2\sin(\alpha_{102})}.
\end{equation}

\end{theorem}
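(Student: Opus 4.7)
My overall strategy is to push the problem from the convex quadrilateral onto the symmetric tetragon of Section~2 via the invariance lemma, extract a trigonometric identity at $A_{0}$ from the floating equilibrium condition, and then recover the location of $A_{0}$ inside $A_{1}^{\prime}A_{2}^{\prime}A_{3}^{\prime}A_{4}^{\prime}$ by working one planar triangle along the diagonal $A_{1}^{\prime}A_{3}^{\prime}$.

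First, Lemma~\ref{tetragonnn} tells us that the weighted Fermat-Torricelli point $A_{0}$ of the convex quadrilateral coincides with that of the auxiliary symmetric tetragon of Theorem~\ref{theortetr}. Consequently the four angles subtended at $A_{0}$, namely $\alpha_{102}$, $\alpha_{203}$, $\alpha_{304}$, $\alpha_{401}$, are transferred verbatim from Proposition~\ref{anglestetragon} and may henceforth be treated as known functions of $B_{1}$, $B_{4}$ and $a$. Next, I would derive (\ref{eq:evquad1}) by writing the floating equilibrium condition $\sum_{i=1}^{4}B_{i}\vec{u}(A_{0},A_{i}^{\prime})=\vec{0}$ in polar form with the ray $A_{0}A_{1}^{\prime}$ chosen as reference direction. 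The four unit vectors then have polar angles $0$, $\alpha_{102}$, $\alpha_{102}+\alpha_{203}$ and $\alpha_{102}+\alpha_{203}+\alpha_{304}$, and splitting into Cartesian components yields two linear equations in $B_{3}\cos(\alpha_{102}+\alpha_{203})$ and $B_{3}\sin(\alpha_{102}+\alpha_{203})$. Dividing them produces $\cot(\alpha_{102}+\alpha_{203})$, and the identity $\alpha_{102}+\alpha_{203}+\alpha_{304}+\alpha_{401}=2\pi$ converts this into $\cot(\alpha_{304}+\alpha_{401})$, giving (\ref{eq:evquad1}).

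For (\ref{eq:evquad3}) and (\ref{a02prime})--(\ref{alpha120prime}), I would introduce the unknown $\alpha_{013}^{\prime}=\angle A_{0}A_{1}^{\prime}A_{3}^{\prime}$. Under the hypothesis that $A_{0}$ lies in the interior of $\triangle A_{1}^{\prime}A_{2}^{\prime}A_{3}^{\prime}$, the ray $A_{1}^{\prime}A_{0}$ sits between $A_{1}^{\prime}A_{2}^{\prime}$ and $A_{1}^{\prime}A_{3}^{\prime}$, so $\alpha_{012}^{\prime}=\alpha_{213}^{\prime}-\alpha_{013}^{\prime}$. The law of sines applied in $\triangle A_{1}^{\prime}A_{0}A_{2}^{\prime}$ (with angle $\alpha_{102}$ at $A_{0}$) and in $\triangle A_{1}^{\prime}A_{0}A_{3}^{\prime}$ (with the non-reflex angle $\alpha_{304}+\alpha_{401}$ at $A_{0}$, which is the correct one under $B_{1}>B_{4}$) gives two expressions for the common side $a_{01}^{\prime}$. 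Equating them, expanding the compound sines with the addition formulas, dividing by $\cos\alpha_{013}^{\prime}$ and separating the coefficient of $\tan\alpha_{013}^{\prime}$ produces a single linear equation whose solution, after substituting (\ref{eq:evquad1}) for $\cot(\alpha_{304}+\alpha_{401})$, is (\ref{eq:evquad3}). Once $\alpha_{013}^{\prime}$ is in hand, (\ref{alpha120prime}) follows from the angle sum $\alpha_{102}+\alpha_{012}^{\prime}+\alpha_{120}^{\prime}=\pi$ in $\triangle A_{1}^{\prime}A_{0}A_{2}^{\prime}$, and (\ref{a02prime}) from another application of the law of sines in that same triangle; the pair $(a_{02}^{\prime},\alpha_{120}^{\prime})$ then pins down $A_{0}$ uniquely from $A_{2}^{\prime}$.

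The main obstacle I expect is the combinatorial identification of the correct geometric angle at $A_{0}$ in $\triangle A_{1}^{\prime}A_{0}A_{3}^{\prime}$: one has to check that the hypothesis $B_{1}>B_{4}$ together with the floating-case inequalities (\ref{floatingcasetetr1}) and (\ref{floatingcasequad2}) force the non-reflex angle to be $\alpha_{304}+\alpha_{401}$ rather than $\alpha_{102}+\alpha_{203}$, so that the $\cot(\alpha_{304}+\alpha_{401})$ appearing in (\ref{eq:evquad3}) has the intended sign. The remaining work is a routine but careful rearrangement of the compound-trigonometric equation into the asserted closed form.
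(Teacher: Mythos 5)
Your proposal is correct and shares the paper's overall architecture (Lemma~\ref{tetragonnn} to transfer the angles $\alpha_{102},\dots,\alpha_{401}$ from the tetragon, then trigonometry in the triangles with common vertex $A_{1}^{\prime}$), but it reaches the two key formulas by a genuinely different route. The paper derives (\ref{eq:evquad1}) variationally: it expresses $a_{02}^{\prime},a_{03}^{\prime},a_{04}^{\prime}$ through the cosine law as functions of $a_{01}^{\prime}$ and $\alpha_{013}^{\prime}$, differentiates the objective function with respect to these two variables, and invokes the technique of \cite[Solution~2.2]{Zachos/Zou:88}; you instead read (\ref{eq:evquad1}) directly off the already-established floating equilibrium condition (\ref{floatingequlcond}) written in polar form about $A_{0}$, using $\alpha_{102}+\alpha_{203}+\alpha_{304}+\alpha_{401}=2\pi$ to pass from $\cot(\alpha_{102}+\alpha_{203})$ to $\cot(\alpha_{304}+\alpha_{401})$. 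Since the equilibrium condition is precisely the vanishing of the gradient, the two derivations are equivalent, but yours avoids redoing the differentiation and is self-contained. Likewise, for (\ref{eq:evquad3}) the paper only says it is obtained by ``solving with respect to $\alpha_{013}^{\prime}$'' (delegating the computation to the cited reference, and note that (\ref{eq:evquad1}) itself does not contain $\alpha_{013}^{\prime}$), whereas you make the missing step explicit by equating the two law-of-sines expressions for the common side $a_{01}^{\prime}$ in $\triangle A_{1}^{\prime}A_{0}A_{2}^{\prime}$ and $\triangle A_{1}^{\prime}A_{0}A_{3}^{\prime}$; carrying that out does produce a linear equation in $\tan\alpha_{013}^{\prime}$ whose solution has exactly the structure of the right-hand side of (\ref{eq:evquad3}) (which strongly suggests the left-hand side of (\ref{eq:evquad3}) should read $\cot\alpha_{013}^{\prime}$ rather than $\alpha_{013}^{\prime}$ --- a typo your derivation would catch). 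Your flagged concern about selecting the non-reflex angle $\alpha_{304}+\alpha_{401}$ at $A_{0}$ in $\triangle A_{1}^{\prime}A_{0}A_{3}^{\prime}$ is the right thing to verify and is implicitly assumed, not checked, in the paper as well.
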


\begin{proof}[Proof of Theorem~\ref{theorquadnn}:]

From lemma~\ref{tetragonnn} the weighted Fermat Torricelli point
$A_{0}$ of $A_{1}A_{2}A_{3}A_{4}$ is the same with the weighted
Fermat-Torricelli point $A_{0}^{\prime}\equiv A_{0}$ of
$A_{1}^{\prime}A_{2}^{\prime}A_{3}^{\prime}A_{4}^{\prime},$ for
the weights $B_{1}=B_{2}$ and $B_{3}=B_{4},$ under the conditions
(\ref{floatingcasetetr1}), (\ref{floatingcasequad2}) and
$B_{1}>B_{4}.$

Thus, we derive that:

$\alpha_{102}=\alpha_{102}^{\prime},$
$\alpha_{203}=\alpha_{203}^{\prime},$
$\alpha_{304}=\alpha_{304}^{\prime}$ and
$\alpha_{401}=\alpha_{401}^{\prime}.$

By applying the same technique that was used in
\cite[Solution~2.2,pp.~412-414]{Zachos/Zou:88} we express
$a_{02}^{\prime},$ $a_{03}^{\prime},$ $a_{04}^{\prime}$ as a
function of $a_{01}^{\prime}$ and $\alpha_{013}^{\prime}$ taking
into account the cosine law to the corresponding triangles
$\triangle A_{2}^{\prime}A_{1}^{\prime}A_{0}^{\prime},$ $\triangle
A_{3}^{\prime}A_{1}^{\prime}A_{0}^{\prime}$ and $\triangle
A_{4}^{\prime}A_{1}^{\prime}A_{0}^{\prime}.$ By differentiating
the objective function (\ref{obj1}) with respect to
$a_{01}^{\prime}$ and $\alpha_{013}^{\prime}$ and applying the
sine law in $\triangle
A_{2}^{\prime}A_{1}^{\prime}A_{0}^{\prime},$ $\triangle
A_{3}^{\prime}A_{1}^{\prime}A_{0}^{\prime}$ and $\triangle
A_{4}^{\prime}A_{1}^{\prime}A_{0}^{\prime}$ we derive
(\ref{eq:evquad1}) and solving with respect to
$\alpha_{013}^{\prime}$ we derive (\ref{eq:evquad3}). By applying
the sine law in $\triangle
A_{2}^{\prime}A_{1}^{\prime}A_{0}^{\prime},$ we get
(\ref{a02prime}).

Finally,
$\alpha_{120}^{\prime}=\pi-\alpha_{102}-(\alpha_{123}^{\prime}-\alpha_{013}^{\prime}).$

\end{proof}

\section{The weighted Fermat-Torricelli problem for convex quadrilaterals: The case  $B_{1}=B_{3}$ and $B_{2}=B_{4}.$ }

Let $A_{1}^{\prime}A_{2}^{\prime}A_{3}^{\prime}A_{4}^{\prime}$ be
a convex quadrilateral with corresponding non-negative weights
$B_{1}=B_{3}$ at the vertices $A_{1}^{\prime}, A_{2}^{\prime}$ and
$B_{2}=B_{4}$ at the vertices $A_{3}^{\prime}, A_{4}^{\prime}.$

We select $B_{1}$ and $B_{4}$ which satisfy the inequalities
(\ref{floatingcasetetr1}), such that $A_{0}$ is an interior point
of $A_{1}^{\prime}A_{2}^{\prime}A_{3}^{\prime}A_{4}^{\prime}.$

\begin{theorem}\label{diagonalquad}
The location of the weighted Fermat-Torricelli point $A_{0}$ of
$A_{1}^{\prime}A_{2}^{\prime}A_{3}^{\prime}A_{4}^{\prime}$ for
$B_{1}=B_{3}$ and $B_{2}=B_{4}$ under the conditions
(\ref{floatingcasetetr1}), (\ref{floatingcasequad2}) is the
intersection point of the diagonals $A_{1}^{\prime}A_{3}^{\prime}$
and  $A_{2}^{\prime}A_{4}^{\prime}.$
\end{theorem}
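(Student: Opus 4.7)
The plan is to verify directly that the intersection point $O$ of the two diagonals $A_1^{\prime}A_3^{\prime}$ and $A_2^{\prime}A_4^{\prime}$ satisfies the weighted floating equilibrium condition (\ref{floatingequlcond}), and then invoke the uniqueness of the weighted Fermat-Torricelli point from Theorem~\ref{theor1}(a). The approach is essentially a one-line vector computation combined with a symmetry observation, rather than the heavy algebraic machinery of Section~2.

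The key geometric fact is that $O$ lies on both diagonals, so the unit vector $\vec u(O,A_1^{\prime})$ is the negative of $\vec u(O,A_3^{\prime})$, and similarly $\vec u(O,A_2^{\prime})=-\vec u(O,A_4^{\prime})$. Under the hypotheses (reading them as equal weights at opposite vertices, namely $B_1=B_3$ at $A_1^{\prime},A_3^{\prime}$ and $B_2=B_4$ at $A_2^{\prime},A_4^{\prime}$), the weighted vector sum at $O$ collapses in pairs:
\begin{equation*}
\sum_{i=1}^{4} B_i \vec u(O,A_i^{\prime}) = B_1\bigl(\vec u(O,A_1^{\prime})+\vec u(O,A_3^{\prime})\bigr)+B_2\bigl(\vec u(O,A_2^{\prime})+\vec u(O,A_4^{\prime})\bigr)=\vec 0.
\end{equation*}

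Next I would invoke the floating case hypothesis (\ref{floatingcasetetr1}), which by Theorem~\ref{theor1}(b$_1$) rules out any vertex $A_i^{\prime}$ being the Fermat-Torricelli point, so $A_0$ must satisfy the equilibrium equation (\ref{floatingequlcond}) at an interior point. Since $O$ is an interior point (the diagonals of a convex quadrilateral meet inside) and satisfies the same equilibrium equation, uniqueness from Theorem~\ref{theor1}(a) forces $A_0=O$.

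The proof is essentially free of obstacles; the only subtlety is carefully matching the labeling of weights to vertices so that the canceling pairs lie along the same diagonal (equal weights must be placed at diagonally opposite vertices for the argument to work), and verifying that the interior intersection $O$ indeed qualifies as a valid candidate under the floating case, which is ensured by hypothesis (\ref{floatingcasetetr1}). No use of Theorem~\ref{theortetr} or Lemma~\ref{tetragonnn} is required.
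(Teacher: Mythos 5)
Your proof is correct, but it runs in the opposite direction from the paper's. The paper starts from the fact that $A_{0}$ satisfies the floating equilibrium condition (\ref{floatingequlcond}), regroups it as $\vec{B_{1}}+\vec{B_{2}}=-(\vec{B_{3}}+\vec{B_{4}})$ and $\vec{B_{1}}+\vec{B_{4}}=-(\vec{B_{2}}+\vec{B_{3}})$, and by taking inner products (i.e.\ comparing squared norms, using $B_{1}=B_{3}$, $B_{2}=B_{4}$) deduces $\alpha_{102}=\alpha_{304}$ and $\alpha_{401}=\alpha_{203}$; since the four angles at $A_{0}$ sum to $2\pi$, this forces $\alpha_{102}+\alpha_{203}=\pi$ and $\alpha_{203}+\alpha_{304}=\pi$, so $A_{1}^{\prime},A_{0},A_{3}^{\prime}$ and $A_{2}^{\prime},A_{0},A_{4}^{\prime}$ are collinear and $A_{0}$ is the diagonal intersection (the paper leaves this last collinearity step implicit). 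You instead verify directly that the intersection point $O$ satisfies $\sum B_{i}\vec u(O,A_{i}^{\prime})=\vec 0$ because opposite unit vectors cancel in pairs, and then conclude $O=A_{0}$. Your route is shorter and arguably cleaner, but note one small logical point: Theorem~\ref{theor1}(a) asserts uniqueness of the \emph{minimizer}, not uniqueness of solutions of the equilibrium equation, so to conclude $O=A_{0}$ you must add that the equilibrium condition is \emph{sufficient} for minimality --- i.e.\ that a non-vertex critical point of the convex objective $f$ is its global minimum (this is part of the Kupitz--Martini characterization and of the convexity of $f$, but it is not literally contained in the statement of Theorem~\ref{theor1} as quoted). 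With that one sentence added, your argument is complete; you are also right that neither Theorem~\ref{theortetr} nor Lemma~\ref{tetragonnn} is needed, and the paper's proof does not use them either.
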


\begin{proof}[Proof of Theorem~\ref{diagonalquad}:]
From the weighted floating equilibrium condition
(\ref{floatingequlcond}) of theorem~\ref{theor1} we get:

\begin{equation}\label{fltcond1}
\vec{B_{1}}+\vec{B_{2}}=-(\vec{B_{3}}+\vec{B_{4}})
\end{equation}

and

\begin{equation}\label{fltcond2}
\vec{B_{1}}+\vec{B_{4}}=-(\vec{B_{2}}+\vec{B_{3}})
\end{equation}

Taking the inner product of the first part of (\ref{fltcond1})
with $\vec{B_{1}}+\vec{B_{2}}$ and the second part of
(\ref{fltcond1}) with $-(\vec{B_{3}}+\vec{B_{4}}),$ we derive
that: \[\alpha_{102}=\alpha_{304}.\]

Similarly, taking the inner product of the first part of
(\ref{fltcond2}) with $\vec{B_{1}}+\vec{B_{4}}$ and the second
part of (\ref{fltcond2}) with $-(\vec{B_{2}}+\vec{B_{3}}),$ we
derive that: \[\alpha_{104}=\alpha_{203}.\]

\end{proof}

\begin{proposition}\label{diagonalquadcom}
The location of the complementary weighted Fermat-Torricelli point
$A_{0}$ of
$A_{1}^{\prime}A_{2}^{\prime}A_{3}^{\prime}A_{4}^{\prime}$ for
$B_{1}=B_{3}<0$ and $B_{2}=B_{4}<0$ under the conditions
(\ref{floatingcasetetr1}), (\ref{floatingcasequad2}) is the
intersection point of the diagonals $A_{1}^{\prime}A_{3}^{\prime}$
and  $A_{2}^{\prime}A_{4}^{\prime}.$
\end{proposition}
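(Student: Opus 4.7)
The plan is to mirror the proof of Theorem~\ref{diagonalquad} almost verbatim, exploiting the fact that negating every weight leaves the equilibrium condition invariant, exactly as in the reduction used in Proposition~\ref{theortetrcomp1}. Concretely, the complementary weighted Fermat--Torricelli point $A_{0}$ with $B_{1}=B_{3}<0$ and $B_{2}=B_{4}<0$ satisfies the floating equilibrium condition (\ref{floatingequlcond}), which reads
\begin{equation*}
\vec{B}_{1}+\vec{B}_{2}+\vec{B}_{3}+\vec{B}_{4}=\vec{0};
\end{equation*}
multiplying by $-1$ gives $(-\vec{B}_{1})+(-\vec{B}_{2})+(-\vec{B}_{3})+(-\vec{B}_{4})=\vec{0}$, which is formally identical to the equilibrium condition of the positive-weight case treated in Theorem~\ref{diagonalquad}, now with positive weights $|B_{1}|=|B_{3}|$ and $|B_{2}|=|B_{4}|$.

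First I would split the equilibrium relation into the two pairings
\begin{equation*}
\vec{B}_{1}+\vec{B}_{2}=-(\vec{B}_{3}+\vec{B}_{4}),\qquad \vec{B}_{1}+\vec{B}_{4}=-(\vec{B}_{2}+\vec{B}_{3}),
\end{equation*}
and then, exactly as in Theorem~\ref{diagonalquad}, take the inner product of each side of the first identity with itself. Because $|B_{1}|=|B_{3}|$ and $|B_{2}|=|B_{4}|$, expanding the inner products yields, after cancellation,
\begin{equation*}
\cos\alpha_{102}=\cos\alpha_{304},
\end{equation*}
hence $\alpha_{102}=\alpha_{304}$. The same argument applied to the second identity gives $\alpha_{104}=\alpha_{203}$. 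Equality of these vertical angles at $A_{0}$ forces $A_{1}^{\prime}$, $A_{0}$, $A_{3}^{\prime}$ to be collinear and likewise $A_{2}^{\prime}$, $A_{0}$, $A_{4}^{\prime}$ to be collinear, so $A_{0}$ lies on both diagonals of $A_{1}^{\prime}A_{2}^{\prime}A_{3}^{\prime}A_{4}^{\prime}$ and must be their (unique) intersection.

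The only real subtlety, which I would highlight explicitly, is the direction convention for the vectors $\vec{B}_{i}$ when the weights are negative: as noted in the figures accompanying Proposition~\ref{theortetrcomp1}, a negative weight reverses the direction of $\vec{B}_{i}$ relative to the ray $A_{0}A_{i}$. However, since every weight is negated simultaneously, both sides of each pairing identity are reversed together, and the magnitudes $\|\vec{B}_{i}\|=|B_{i}|$ that enter the inner-product computation are unchanged. Thus the angle equalities are preserved and the conclusion transfers with no additional computation. The floating hypotheses (\ref{floatingcasetetr1}) and (\ref{floatingcasequad2}) are needed only to guarantee that $A_{0}$ is an interior point and not absorbed at a vertex, which keeps the inner-product argument geometrically meaningful.
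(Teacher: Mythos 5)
Your proposal is correct and follows essentially the same route as the paper: the paper's proof likewise observes that negating all four weights leaves the floating equilibrium condition $\vec{B_{1}}+\vec{B_{2}}+\vec{B_{3}}+\vec{B_{4}}=\vec{0}$ invariant, so the complementary point coincides with the weighted Fermat--Torricelli point of the positive-weight case, which is the diagonal intersection by Theorem~\ref{diagonalquad}. Your explicit re-run of the inner-product pairing argument and the remark on the reversed direction of the $\vec{B}_{i}$ merely unwind that citation and make the (implicit) collinearity step explicit, which is a harmless elaboration rather than a different method.
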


\begin{proof}[Proof of Proposition~\ref{diagonalquadcom}:]
Taking into account (\ref{obj1}) for $B_{1}=B_{3}<0,$
$B_{2}=B_{4}<0$ we derive the same vector equilibrium condition
$\vec{B_{1}}+\vec{B_{2}}+\vec{B_{3}}+\vec{B_{4}}=\vec{0}.$
Therefore, we obtain that the complementary weighted
Fermat-Torricelli point $A_{0}^{\prime}$ for $B_{1}=B_{3}<0,$
$B_{2}=B_{4}<0$ coincides with the weighted Fermat-Torricelli
point $A_{0}$ of
$A_{1}^{\prime}A_{2}^{\prime}A_{3}^{\prime}A_{4}^{\prime}$  for
$B_{1}=B_{3}>0,$ $B_{2}=B_{4}>0.$
\end{proof}

The author acknowledges Professor Dr. Vassilios G. Papageorgiou
for many fruitful discussions and for his valuable comments.

\end{document}